\RequirePackage[final]{graphicx}
\documentclass[oneside, dvipsnames, svgnames, table, final]{amsart} %
\usepackage{etoolbox}
\usepackage{ifdraft}

\usepackage{xcolor}

\usepackage[colorlinks=true, citecolor=green!50!black, linkcolor=red!50!black, pagebackref=false, final]{hyperref}

\ifdraft{\usepackage[color,notref, notcite]{showkeys}}{}
\usepackage[letterpaper]{geometry}
\usepackage{xargs}
\usepackage{amsmath, amsthm, mathrsfs, amssymb}
\usepackage{tikz-cd} 
\usepackage[final]{graphicx}
\usepackage{enumitem}
\usepackage{bm}

\usepackage[T1]{fontenc}

\newcommand{\mathbx}[1]{\mathbb{#1}}

\usepackage[colorinlistoftodos,prependcaption,textsize=tiny]{todonotes} 

\usepackage{fourier}
\usepackage[foot]{amsaddr}

\theoremstyle{plain}
\ifcscounter{chapter}{%
        \newtheorem{thm}{Theorem}[chapter]%
        }%
        {%
        \newtheorem{thm}{Theorem}[section]%
        }
\newtheorem{theorem}[thm]{Theorem}

\newtheorem{lemma}[thm]{Lemma}

\newtheorem{proposition}[thm]{Proposition}

\newtheorem*{claim*}{Claim} 
\newtheorem*{thm*}{Theorem}
\newtheorem*{theorem*}{Theorem}

\theoremstyle{definition}

\newtheorem{definition}[thm]{Definition}

\theoremstyle{remark}

\newcommand{\C}{\mathbx{C}}

\newcommand{\Q}{\mathbx{Q}}

\newcommand{\Z}{\mathbx{Z}}

\providecommand{\deg}{\operatorname{deg}}

\renewcommand{\deg}{\operatorname{deg}}

\renewcommand{\vec}[1]{\mathbf{#1}}

\newcommand{\Orb}{\operatorname{Orb}}
\newcommand{\Stab}{\operatorname{Stab}}
\newcommand{\lcm}{\operatorname{lcm}}

\usepackage[backref=true,giveninits=true,backend=biber,style=numeric,url=false]{biblatex}
\author{Matthew Bolan}
\address[M.~Bolan]{Department of Mathematics, University of Toronto\\
Bahen Centre, Room 6290\\
40 St. George St., Toronto, ON, M5S 2E4\\
Canada.}
\email[M.~Bolan]{matthew.bolan@mail.utoronto.ca}
\author{Ben Williams }
 \address[B.~Williams]{Department of Mathematics, the University of British Columbia\\
   1984 Mathematics Rd \\
   Vancouver BC V6T 1Z2\\
 Canada.}
\email[B.~Williams]{tbjw@math.ubc.ca}
\thanks{
  B.~Williams acknowledges the support of the Natural Sciences and Engineering Research Council of Canada (NSERC), RGPIN-2021-02603 and of the Pacific Institute for the Mathematical Sciences (PIMS) through CRG41.
}

\begin{document}
\title{The degrees of irreducible factors of binomials and multiplicativity of the $n$-Hartley condition}

\begin{abstract}
  We prove that, over a field of characteristic $0$, the degrees of factors of a binomial $t^n-\alpha$ are divisible by the least such degree. As a consequence, we deduce that for relatively prime natural numbers $m,n$, a polynomial has the $mn$-Hartley condition if and only if it has the $m$-Hartley and $n$-Hartley conditions.
\end{abstract}

\subjclass[2020]{12D05, 57K14}
\keywords{Factoring binomials, Alexander polynomials, Hartley condition, freely periodic knots.}
\maketitle

\section{The \texorpdfstring{$n$}{n}-Hartley condition}
\label{sec:introduction}

Let $\bar \Q \subset \C$ denote the field of algebraic numbers. Let $n$ be a positive integer. Write $\zeta_n = \exp(2\pi i/n)$, a chosen primitive $n$-th root of $1$ in $\bar \Q$. If $\Delta(t)$ is a polynomial in $\Z[t]$, then we say $\Delta(t)$ has the \emph{$n$-Hartley condition} or is \emph{$n$-Hartley} for short, if there exists a polynomial $g(t) \in \Z[t]$ for which 
\[ \Delta(t^n) = \pm \prod_{j=0}^{n-1} g(\zeta_n^j t). \]
The virtue of the $n$-Hartley condition is this: if a knot $K$ in the $3$-dimensional sphere $S^3$ has a freely periodic symmetry of order $n$, i.e., if the cyclic group of order $n$ acts freely on $S^3$ by orientation-preserving diffeomorphisms that leave $K$ invariant, then the Alexander polynomial $\Delta_K(t)$ of $K$ has the $n$-Hartley condition. This is \cite[Thm.~1.2]{Hartley1981}, slightly modified by \cite[Def.~1.3, Prop.~2.2]{Boyle2026}.

Recall that a polynomial with integer coefficients is \emph{primitive} if no prime divides all the coefficients. We will say a polynomial in $\Z[t]$ is \emph{irreducible} if it is irreducible as an element of $\Q[t]$. The following is \cite[Prop.~2.8]{Boyle2026}.
\begin{proposition}\label{pr:nHartley}
  If $h(t) \in \Z[t]$ is an irreducible primitive polynomial of positive degree having $\alpha$ as a root and $s$ is a nonnegative integer, then $\Delta(t)=h(t)^s$ has the $n$-Hartley property if and only if there exists some polynomial $q(t) \in \Q(\alpha)[t]$ of degree $s$ all of whose roots are $n$-th roots of $\alpha$.
\end{proposition}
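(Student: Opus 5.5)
We prove both directions by relating the factorization $\Delta(t^n)=\pm\prod_j g(\zeta_n^j t)$ to a norm from $\Q(\zeta_n)(t)$, or more concretely, by passing through the roots. Let $\alpha=\alpha_1,\dots,\alpha_d$ be the roots of $h$, which are distinct since $h$ is irreducible over a field of characteristic $0$. Then
\[ h(t^n)=c\prod_{i=1}^d\prod_{\beta^n=\alpha_i}(t-\beta), \]
and its roots are the $nd$ distinct $n$-th roots of the $\alpha_i$.

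\emph{($\Rightarrow$).} Suppose $h(t^n)^s=\pm\prod_j g(\zeta_n^j t)$ with $g\in\Z[t]$. Comparing degrees gives $\deg g = ds$.

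\begin{enumerate}[label=(\roman*)]
\item The roots of $\prod_j g(\zeta_n^j t)$ are the elements $\zeta_n^{-j}\gamma$, where $\gamma$ runs over the roots of $g$.
\item Each root $\gamma$ of $g$ is a root of $h(t^n)$, so $\gamma^n=\alpha_i$ for some $i$.
\item The map $\gamma\mapsto\gamma^n$ sends the roots of $\prod_j g(\zeta_n^jt)$, with multiplicity, onto the roots of $h(t)^s$ with multiplicity each scaled by $n$. Since $\zeta_n^{-j}\gamma$ and $\gamma$ have the same $n$-th power, the multiset $\{\gamma^n\}$ over the roots of $g$ is exactly $s$ copies of $\{\alpha_1,\dots,\alpha_d\}$. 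In other words, exactly $s$ roots of $g$, counted with multiplicity, are $n$-th roots of $\alpha$.
\end{enumerate}

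Set $q(t)=\gcd(g(t),h(t^n))$ computed over $\Q(\alpha)$. More precisely, factor $h(t^n)=\prod_i(t^n-\alpha_i)$ over $\Q(\alpha)$ into Galois-stable blocks; the factor $(t^n-\alpha)$ lies in $\Q(\alpha)[t]$. We take $q=\gcd_{\Q(\alpha)[t]}\bigl(g(t)^{\infty},\,(t^n-\alpha)^s\bigr)$, i.e.\ the part of $g$ whose roots are $n$-th roots of $\alpha$. Since $g$ and $t^n-\alpha$ both lie in $\Q(\alpha)[t]$, this part lies in $\Q(\alpha)[t]$: it is $\prod_{\beta^n=\alpha}(t-\beta)^{v_\beta(g)}$, and its coefficients are fixed by every automorphism fixing $\Q(\alpha)$, because such automorphisms permute the $n$-th roots of $\alpha$ and preserve multiplicities in $g$. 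Its degree is $s$ by the count in (iii).

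\emph{($\Leftarrow$).} Given $q\in\Q(\alpha)[t]$ of degree $s$, which we may take monic, with roots that are $n$-th roots of $\alpha$, set
\[ g_0(t)=N_{\Q(\alpha)/\Q}(q)=\prod_{\sigma}q^\sigma(t)\in\Q[t], \]
the product over the $d$ embeddings $\sigma$. Then $g_0$ has degree $ds$, and its roots are $n$-th roots of the $\alpha_i$, with exactly $s$ roots (with multiplicity) over each $\alpha_i$.

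Next, for each root $\gamma$ of $g_0$, the product $\prod_j(\zeta_n^jt-\gamma)$ equals $\pm(t^n-\gamma^n)$. Hence
\[ \prod_j g_0(\zeta_n^jt)=\pm\prod_i(t^n-\alpha_i)^s=\pm h(t^n)^s/c^s, \]
where $c$ is the leading coefficient of $h$.

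Finally, to get integer coefficients we scale, replacing $g_0$ by $g=\lambda g_0$ with $\lambda\in\Q$, so that $\prod_j g(\zeta_n^j t)=\pm\lambda^n h(t^n)^s/c^s$. We need $\lambda^n=\pm c^s$ with $\lambda g_0\in\Z[t]$. By Gauss's lemma, write $g_0=\mu g_1$ with $g_1$ primitive. Then $\prod_j g_1(\zeta_n^jt)$ lies in $\Z[t]$ because it is Galois-invariant over $\Q$ and has algebraic-integer coefficients, and it is a primitive integer polynomial by Gauss's lemma applied over $\Z[\zeta_n]$. It is also a rational multiple of $h(t^n)^s$, which is primitive. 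Two primitive polynomials that are rational multiples of each other agree up to sign, so we take $g=g_1$.

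This content/primitivity bookkeeping is the main technical point of the proof. We will handle it via Gauss's lemma in the ring of integers of $\Q(\zeta_n)$: the content ideal is multiplicative, and the content of $g_1(\zeta_n^jt)$ equals the content of $g_1$.
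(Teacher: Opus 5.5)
The paper does not prove this proposition; it quotes it from \cite[Prop.~2.8]{Boyle2026}. So there is no in-paper argument to compare against, and your proof stands on its own. It is a correct, self-contained proof.

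For ($\Rightarrow$), you push the root multisets of $\pm h(t^n)^s=\prod_j g(\zeta_n^jt)$ forward along $x\mapsto x^n$. This correctly shows that exactly $s$ roots of $g$, with multiplicity, lie over $\alpha$. Galois descent then puts $\prod_{\beta^n=\alpha}(t-\beta)^{v_\beta(g)}$ in $\Q(\alpha)[t]$.

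For ($\Leftarrow$), the norm $N_{\Q(\alpha)/\Q}(q)$ and the identity $\prod_j(\zeta_n^jt-\gamma)=\pm(t^n-\gamma^n)$ give the factorization up to a rational scalar. Comparing primitive polynomials then removes the scalar. A bonus of your route is an explicit Hartley witness: the primitive part of $N_{\Q(\alpha)/\Q}(q)$.

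A few small repairs are needed.

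\emph{The gcd formula.} $\gcd(g^\infty,(t^n-\alpha)^s)$ is not the polynomial you want. It equals $\prod(t-\beta)^s$ over the \emph{distinct} roots $\beta$ of $g$ with $\beta^n=\alpha$, and its degree can exceed $s$. Use $\gcd(g,(t^n-\alpha)^s)$ instead. For $\alpha\neq0$ each such $\beta$ is a simple root of $h(t^n)$, so $v_\beta(g)\le s$, and this gcd is your explicit product. Alternatively, drop the gcd entirely: your Galois-invariance argument already suffices.

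\emph{Distinct roots.} The roots of $h(t^n)$ are distinct only when $\alpha\neq 0$. The case $h=\pm t$ is trivial, with $g=q=t^s$. Your count in (iii) works with multiplicities, so distinctness is never actually used.

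\emph{The scalar.} You never need to arrange $\lambda^n=\pm c^s$: the primitivity comparison already forces $\prod_j g_1(\zeta_n^jt)=\pm h(t^n)^s$. If you want to avoid content ideals over $\Z[\zeta_n]$, there is a direct route. Suppose a prime $p$ divides every coefficient of $\prod_j g_1(\zeta_n^jt)$, and reduce modulo a prime $\mathfrak p$ of $\Z[\zeta_n]$ above $p$. Since $(\Z[\zeta_n]/\mathfrak p)[t]$ is a domain, some factor $g_1(\zeta_n^jt)$ vanishes modulo $\mathfrak p$. That puts every coefficient of $g_1$ in $\mathfrak p\cap\Z=p\Z$, contradicting the primitivity of $g_1$.
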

With this in hand, we see that whether the $n$-Hartley condition holds for $\Delta(t)$ is related to the degrees of irreducible factors of the binomial $t^n -\alpha$. 

\begin{definition}\label{def:lambda}
  Suppose $K$ is a field, $\alpha \in K$ is an element and $n$ is a positive integer. We define $\lambda_K(\alpha, n)$ to be the minimal degree of an irreducible factor of $t^n - \alpha$ over $K$. If $K = \Q(\alpha)$, then we will omit $K$ from the notation, writing $\lambda(\alpha,n)$.
\end{definition}

Our main technical result is this.
\begin{theorem}\label{th:main}
  Let $K$ be a field of characteristic $0$, let $\alpha \in K$ and let $n$ be a positive integer. If $f(t)$ is a factor of $t^n - \alpha$ over $K$, then \[\lambda_K(\alpha, n) \mid \deg(f(t)).\]
\end{theorem}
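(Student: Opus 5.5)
The plan is to reduce the statement to a single lemma: a criterion for when $t^n-\alpha$ has a factor of a prescribed degree over $K$. The key quantity is the \emph{norm} of a root.

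First the trivial case. If $\alpha=0$, every factor of $t^n$ has the form $c\,t^k$ and $\lambda_K(0,n)=1$, so there is nothing to prove. From now on I take $\alpha\neq 0$.

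Since any factor $f$ of $t^n-\alpha$ is a product of irreducible factors, it suffices to show that $d:=\lambda_K(\alpha,n)$ divides $e:=\deg g$ for every irreducible factor $g$. Fix a root $\gamma$ of an irreducible factor of degree $d$, and a root $\delta$ of $g$. Put $c=N_{K(\gamma)/K}(\gamma)$ and $c'=N_{K(\delta)/K}(\delta)$. Both lie in $K$, being $\pm$ the constant terms of the minimal polynomials. Taking $n$-th powers of the product of the conjugates gives $c^n=\alpha^d$ and $c'^n=\alpha^e$. Write $h=\gcd(d,e)=ud+ve$ with $u,v\in\Z$, and set $\theta=c^u c'^v\in K$. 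Then
\[\theta^n=\alpha^{h}.\]
Since $h\le d$ and $h\mid d$, it will be enough to produce a factor of $t^n-\alpha$ over $K$ of degree exactly $h$. Indeed, some irreducible factor of it then has degree at most $h$. Minimality of $d$ forces $h=d$, hence $d\mid e$.

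So the heart of the argument is the following lemma.

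\emph{Lemma.} If $\alpha\neq0$, $1\le h\le n$, and there is $\theta\in K$ with $\theta^n=\alpha^h$, then $t^n-\alpha$ has a factor of degree $h$ over $K$.

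(The converse is immediate from the constant-term computation above, so the lemma says that having a factor of degree $h$ is detected exactly by $\alpha^h$ being an $n$-th power in $K$. As a check, for $t^4+4$ over $\Q$ with $h=2$, $\theta=2$ works, matching $t^4+4=(t^2+2t+2)(t^2-2t+2)$, while for $h=1$ no $\theta$ exists.)

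I expect the lemma to be the main obstacle. My first attempt goes as follows. Let $m=n/\gcd(n,h)$ and $h'=h/\gcd(n,h)$, and fix a root $\beta$ of $t^n-\alpha$. From $\theta^n=\alpha^h=\beta^{nh}$ we get $\theta=\omega\beta^h$ for some $n$-th root of unity $\omega$. Reduce exponents via $\gcd(n,h)$, using B\'ezout $an+bh=\gcd(n,h)$ and the element $\alpha^a\theta^b$, to find $\eta\in K$ with
\[\eta^{m}=\alpha^{h'}\cdot(\text{root of unity}).\]
The aim is then to write down an explicit polynomial of degree $h$ dividing $t^n-\alpha$. The candidate is a product of $h$ roots $\beta\zeta_n^{j}$ chosen along a coset of the subgroup $\mu_{n}^{\,m}$, suitably twisted so that the product is stable under $\mathrm{Gal}(\bar K/K)$. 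Stability should be checked via Kummer theory over $K(\mu_n)$: over $K(\mu_n)$ all irreducible factors of $t^n-\alpha$ have the same degree, because the splitting field is Galois there. The Galois action of $\mathrm{Gal}(K(\mu_n)/K)$ on roots of unity must then be matched with its action on $\beta$. Characteristic $0$ is used to guarantee that $t^n-\alpha$ is separable and that $\mu_n$ has order $n$.

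The delicate point is the interplay with roots of unity in $K$, which is the Capelli/Schinzel phenomenon, as with $-4$ being $-4\cdot 1^4$. If the direct construction gets stuck, I would fall back on induction on $n$ via prime-power factorisations, using Capelli's theorem for the irreducibility of the individual binomials that appear.
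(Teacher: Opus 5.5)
\textbf{There is a genuine gap: your key lemma is false.} Your reduction up to the lemma is correct. The norms give $c^n=\alpha^d$ and $c'^n=\alpha^e$, and B\'ezout gives $\theta\in K$ with $\theta^n=\alpha^{h}$ for $h=\gcd(d,e)$. The lemma, even in the weaker form ``$\lambda_K(\alpha,n)\le h$'' that you actually use, would then finish the proof.

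Take $K=\Q$, $n=4$, $\alpha=-1$, $h=2$, $\theta=1$. Then $\theta^4=1=\alpha^2$, yet $t^4+1=\Phi_8(t)$ is irreducible over $\Q$. So there is no factor of degree $2$, and $\lambda_{\Q}(-1,4)=4>h$. Compare your own check $t^4+4$: the condition ``$\alpha^2$ is a fourth power in $\Q$'' holds for both $\alpha=-4$ and $\alpha=-1$, but only one of the two binomials has a quadratic factor. Likewise $t^5-1=(t-1)\Phi_5(t)$ with $h=2$, $\theta=1$ has no quadratic factor.

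So only the converse half of your ``detected exactly by'' slogan is true. The hypothesis $\alpha^h\in K^{\times n}$ records only the constant term of a would-be factor. It forgets how the Galois group permutes the roots $\zeta_n^j\beta$, which is what determines the possible degrees. No ``twisted coset'' construction can succeed from that hypothesis alone. In your application the lemma's conclusion does hold, but only because $\gcd(\lambda_K(\alpha,n),e)=\lambda_K(\alpha,n)$, which is the theorem you are trying to prove. Your fallback is too vague to close the gap: Capelli's theorem decides irreducibility, not which degrees occur among the factors.

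\textbf{What is missing} is a direct analysis of the Galois action on the roots. The paper identifies $\mathrm{Gal}(K(\zeta_n,\beta)/K)$ with a group $G$ of affine maps $j\mapsto aj+b$ of $\Z/n\Z$. It then shows that the smallest $G$-orbit size divides every orbit size.

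For $n=p^l$, the argument runs as follows. The translations $H\subseteq G$ act freely, so every stabilizer injects into the group $A$ of slopes $a$, and every orbit size is a multiple of $\#H$. If every slope is $\equiv 1 \pmod p$, then $G$ is a $p$-group and all orbit sizes are powers of $p$. Otherwise $p$ is odd and $A$ is cyclic. An element $j\mapsto aj+b$ with $a$ generating $A$ then has $a\not\equiv 1 \pmod p$ and fixes $-b/(a-1)$. That point's stabilizer surjects onto $A$, so its orbit has size exactly $\#H$.

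The general case follows by the Chinese remainder theorem, combining the coprime prime-power pieces. Your norm argument has no counterpart to this step.
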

The proof of this result is deferred to Section \ref{sec:proof-theorem}.

There are substantial known results concerning the reducibility of binomials $t^n - \alpha$, notably Capelli's theorem \cite[Thm.~21]{Schinzel1982}, and the nature of the factors of such a binomial, see e.g., \cite[Lem.~3]{Schinzel1977}. We have not found Theorem \ref{th:main} in the literature.

The rest of this section proceeds under the assumption that Theorem \ref{th:main} has been established.
Since $\lambda_K(\alpha,n)$ divides the degrees of all factors of $t^n-\alpha$, we see that $\lambda_K(\alpha,n) \mid n$. We prove that $\lambda_K(\alpha, n)$ is multiplicative.
\begin{proposition}\label{pr:multiplicativity}
  Let $K$ be a field of characteristic $0$, let $\alpha \in K$ be an element, and suppose $m,n$ are two relatively prime natural numbers. There is an equality
  \[ \lambda_K(\alpha,m)\lambda_K(\alpha,n) =  \lambda_K(\alpha,mn). \]
\end{proposition}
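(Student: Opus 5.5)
We prove the two divisibilities $\lambda_K(\alpha,m)\lambda_K(\alpha,n) \mid \lambda_K(\alpha,mn)$ and $\lambda_K(\alpha,mn) \le \lambda_K(\alpha,m)\lambda_K(\alpha,n)$; together they give equality. Assume $\alpha \neq 0$; if $\alpha=0$, every $\lambda$ equals $1$ and there is nothing to prove. We work with roots in an algebraic closure. The basic observation is that $\lambda_K(\alpha,n)$ is the minimum of $[K(\beta):K]$ over roots $\beta$ of $t^n-\alpha$.

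\emph{Divisibility.} Let $\gamma$ be a root of $t^{mn}-\alpha$ with $[K(\gamma):K]=\lambda_K(\alpha,mn)$. Then $\gamma^n$ is a root of $t^m-\alpha$, and $K(\gamma^n)\subseteq K(\gamma)$. Hence
\[ \lambda_K(\alpha,m) \le [K(\gamma^n):K] \mid [K(\gamma):K]. \]
This is only an inequality at the first step, so we strengthen it using Theorem \ref{th:main}. The minimal polynomial of $\gamma^n$ over $K$ is a factor of $t^m-\alpha$, so by Theorem \ref{th:main} its degree $[K(\gamma^n):K]$ is divisible by $\lambda_K(\alpha,m)$. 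Therefore $\lambda_K(\alpha,m)\mid \lambda_K(\alpha,mn)$. Symmetrically, $\lambda_K(\alpha,n)\mid\lambda_K(\alpha,mn)$. Since $\lambda_K(\alpha,m)\mid m$ and $\lambda_K(\alpha,n)\mid n$ (noted after Theorem \ref{th:main}) and $\gcd(m,n)=1$, the two are coprime, so their product divides $\lambda_K(\alpha,mn)$.

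\emph{Upper bound.} Choose a root $\beta$ of $t^m-\alpha$ with $[K(\beta):K]=\lambda_K(\alpha,m)$, and a root $\delta$ of $t^n-\alpha$ with $[K(\delta):K]=\lambda_K(\alpha,n)$. Pick integers $a,b$ with $am+bn=1$ and set $\gamma=\beta^{b}\delta^{a}$, which makes sense because $\alpha\ne0$. Then
\[ \gamma^{mn}=\beta^{bmn}\delta^{amn}=\alpha^{bn}\alpha^{am}=\alpha, \]
so $\gamma$ is a root of $t^{mn}-\alpha$ lying in the compositum $K(\beta,\delta)$. Consequently
\[ \lambda_K(\alpha,mn)\le[K(\gamma):K]\le[K(\beta,\delta):K]\le[K(\beta):K]\,[K(\delta):K]. \]
Combined with the divisibility, this gives equality.

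The main obstacle is the divisibility step. The naive inequality $[K(\gamma^n):K]\ge\lambda_K(\alpha,m)$ does not yield divisibility, and Theorem \ref{th:main} is exactly what upgrades it. We also need the coprimality of the two $\lambda$'s, which relies on the consequence $\lambda_K(\alpha,n)\mid n$ recorded just before the proposition. The upper bound is elementary once the Bézout trick produces an $mn$-th root inside the compositum.
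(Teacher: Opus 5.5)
Your proof is correct and follows the paper's argument essentially step for step. For divisibility, you apply Theorem \ref{th:main} to the minimal polynomials of $\gamma^n$ and $\gamma^m$ and then use coprimality of the two $\lambda$'s; for the reverse bound, you use a Bézout monomial in minimal-degree $m$-th and $n$-th roots. Your explicit treatment of $\alpha=0$ (where the paper's negative exponents are undefined) and your reliance on only the inequality $[K(\beta,\delta):K]\le[K(\beta):K][K(\delta):K]$ (the paper asserts equality via coprimality) are minor tidy-ups rather than a different route.
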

\begin{proof}
  Let $\beta$ be an $mn$-th root of $\alpha$ of minimal degree. The field extension $K(\beta)/K$ has degree $\lambda_K(\alpha,nm)$. Note that $\beta^m$ and $\beta^n$ are $n$-th and $m$-th roots of $\alpha$, respectively, so that the subextensions $K(\beta^n)/K$ and $K(\beta^m)/K$ have degrees that are multiples of $\lambda_K(\alpha, m)$ and $\lambda_K(\alpha, n)$ by Theorem \ref{th:main}, and divide $[K(\beta):K]$ by elementary field theory. The integers $\lambda_K(\alpha, m),\, \lambda_K(\alpha, n)$ are relatively prime, since they divide $m,\, n$ respectively. Therefore
  \[ \lambda_K(\alpha, m)\lambda_K(\alpha, n) \mid \lambda_K(\alpha,mn). \]

  For the reverse inequality, Let $\gamma, \delta$ be $m$-th and $n$-th roots of $\alpha$ of minimal degrees $\lambda_K(\alpha,m)$, $\lambda_K(\alpha,n)$ over $K$. Using  relative primality  again, we deduce that $K(\gamma, \delta)$ has degree $\lambda_K(\alpha,m)\lambda_K(\alpha,n)$ over $K$. We can find some monomial $\gamma^i \delta^j$ (where $i, j \in \Z$) that is an $mn$-th root of $\alpha$, however, so that
  \[ \lambda_K(\alpha, mn) \le \lambda_K(\alpha,m) \lambda_K(\alpha,n), \]
  establishing the result. 
\end{proof}

We can restate Proposition \ref{pr:nHartley} as follows:
\begin{proposition}\label{pr:nHartleyBis}
  If $h(t) \in \Z[t]$ is an irreducible primitive polynomial of positive degree having $\alpha$ as a root and $s$ is a nonnegative integer, then $\Delta(t)=h(t)^s$ has the $n$-Hartley property if and only if $\lambda(\alpha, n) \mid s$.
\end{proposition}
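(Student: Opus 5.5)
The plan is to deduce Proposition~\ref{pr:nHartleyBis} directly from Proposition~\ref{pr:nHartley} by showing that the two conditions on the right-hand sides are equivalent. Set $K = \Q(\alpha)$. We must show that there is a polynomial $q(t) \in K[t]$ of degree $s$, all of whose roots are $n$-th roots of $\alpha$, if and only if $\lambda(\alpha,n) \mid s$.

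For the direction ``if'', suppose $s = k\lambda(\alpha,n)$. Let $p(t)$ be an irreducible factor of $t^n-\alpha$ over $K$ of minimal degree $\lambda(\alpha,n)$, and take $q(t) = p(t)^k$. This has degree $s$ and coefficients in $K$. Its roots are roots of $p$, hence $n$-th roots of $\alpha$. The case $s=0$ is covered by $q = 1$, provided we allow constant $q$; this is presumably how Proposition~\ref{pr:nHartley} is meant for $s=0$, since $h^0=1$ is trivially $n$-Hartley.

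For the direction ``only if'', let $q(t) \in K[t]$ have degree $s$ with all roots being $n$-th roots of $\alpha$. Factor $q$ over $K$ into irreducibles, say $q = c\prod_i p_i(t)^{e_i}$. Each $p_i$ is irreducible over $K$ and has a root $\beta$ with $\beta^n=\alpha$. Thus $p_i$ is, up to a scalar, the minimal polynomial of $\beta$ over $K$, so $p_i$ divides $t^n - \alpha$ in $K[t]$. By Theorem~\ref{th:main}, $\lambda(\alpha,n) \mid \deg p_i$ for each $i$. Hence $\lambda(\alpha,n)$ divides $s = \sum_i e_i \deg p_i$.

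I expect no real obstacle, since Theorem~\ref{th:main} does the heavy lifting. The only point needing care is this: $q$ need not divide $t^n-\alpha$, because repeated roots are allowed. So one must pass to irreducible factors, rather than applying Theorem~\ref{th:main} to $q$ itself.
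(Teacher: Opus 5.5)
Your proposal is correct and follows essentially the same route as the paper. The paper also deduces the result from Proposition~\ref{pr:nHartley}: it applies Theorem~\ref{th:main} to the irreducible factors of $q(t)$ for one direction, and takes a power of a minimal-degree irreducible factor of $t^n-\alpha$ over $\Q(\alpha)$ for the other.
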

\begin{proof}
  Suppose $\Delta(t)$ is $n$-Hartley. A polynomial $q(t)$ exists as laid out in Proposition \ref{pr:nHartley}. Its irreducible factors are factors of $t^n-\alpha$, and so their degrees are multiples of $\lambda(\alpha, n)$. In particular, $\deg(q(t))$ is a sum of such numbers and so is divisible by $\lambda(\alpha,n)$.

  Conversely, if $\lambda(\alpha, n) \mid s$, then we can find a minimal irreducible factor $f(t)$ of $t^n-\alpha$ over $\Q(\alpha)$ whose degree is $\lambda(\alpha,n)$, and by taking some power of $f(t)$, we can produce $q(t)$ as required.
\end{proof}

Our result about multiplicativity of the Hartley condition is a corollary of Proposition \ref{pr:nHartleyBis}.
\begin{theorem}\label{th:multiplicativeHartley}
  Let $m,n$ be relatively prime positive integers. Let $\Delta(t) \in \Z[t]$ be a primitive polynomial. It is $nm$-Hartley if and only if it has both $n$-Hartley and $m$-Hartley.
\end{theorem}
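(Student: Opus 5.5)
We reduce to the case of a power of a single irreducible factor, and there combine Proposition~\ref{pr:nHartleyBis} with Proposition~\ref{pr:multiplicativity}.

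\emph{Step 1: reduction to prime powers.} Since $\Z[t]$ is a UFD, factor the primitive polynomial as $\Delta(t) = \pm\prod_k h_k(t)^{s_k}$. Here the $h_k$ are pairwise non-associate irreducible primitive polynomials. We claim that $\Delta$ is $N$-Hartley if and only if every $h_k^{s_k}$ is $N$-Hartley.

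The direction ``if'' is immediate: multiply the witnessing polynomials $g_k$. Each $g_k(\zeta_N^j t)$ is just a rotation, so $\prod_j \prod_k g_k(\zeta_N^j t) = \prod_k \prod_j g_k(\zeta_N^j t)$.

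For ``only if'', suppose $\Delta(t^N) = \pm\prod_j g(\zeta_N^j t)$. Factor $g$ into irreducibles in $\Z[t]$, and let $p$ be an irreducible factor of $g$. Then $p$ divides $\Delta(t^N)$, so $p$ divides $h_k(t^N)$ for exactly one $k$. The index is unique because distinct $h_k$ have disjoint root sets, so distinct $h_k(t^N)$ have disjoint roots. Moreover, if $p(t)$ divides $h_k(t^N)$, then so does $p(\zeta_N^j t)$, up to a unit in $\Q(\zeta_N)$.

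Let $g_k$ be the product of the factors of $g$ attached to $h_k$. Then $\prod_j g_k(\zeta_N^j t)$ is exactly the part of $\Delta(t^N)$ whose roots are $N$-th roots of roots of $h_k$. That part is $h_k(t^N)^{s_k}$ up to sign and content, and content is controlled by Gauss's lemma because $\Delta$ is primitive.

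This sign and content bookkeeping is the step I expect to be fiddliest. If it resists, I would avoid it by rerunning the argument of Proposition~\ref{pr:nHartley} (from \cite{Boyle2026}) for several irreducible factors at once.

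\emph{Step 2: the single-factor case.} By Step 1 it suffices to treat $\Delta = h^s$ with $h$ irreducible, primitive, of positive degree, and with root $\alpha$. By Proposition~\ref{pr:nHartleyBis}:
\begin{itemize}
\item $\Delta$ is $mn$-Hartley if and only if $\lambda(\alpha,mn)\mid s$;
\item $\Delta$ is $m$-Hartley and $n$-Hartley if and only if $\lambda(\alpha,m)\mid s$ and $\lambda(\alpha,n)\mid s$.
\end{itemize}
Take $K=\Q(\alpha)$ in Proposition~\ref{pr:multiplicativity}. This gives $\lambda(\alpha,mn)=\lambda(\alpha,m)\lambda(\alpha,n)$. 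The two factors are coprime, since they divide $m$ and $n$ respectively. Therefore $\lambda(\alpha,mn)\mid s$ holds exactly when both $\lambda(\alpha,m)\mid s$ and $\lambda(\alpha,n)\mid s$, which proves the single-factor case.

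\emph{Step 3: degree-zero factors.} If $\Delta$ is a nonzero constant, primitivity forces $\Delta=\pm1$. This is $N$-Hartley for every $N$ (take $g=1$), so the statement holds trivially. Combining Steps 1 and 2 over all $k$ then completes the proof.
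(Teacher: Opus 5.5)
Your proposal is correct and matches the paper's proof. Both reduce to $\Delta=h^s$ and then combine Proposition~\ref{pr:nHartleyBis} with Proposition~\ref{pr:multiplicativity} and the coprimality of $\lambda(\alpha,m)$ and $\lambda(\alpha,n)$. The only difference is that the paper simply cites \cite[Prop.~2.6]{Boyle2026} for your Step~1, while you sketch that reduction yourself. Your sketch is sound; the one point it leaves implicit is that each $\prod_j g_k(\zeta_N^j t)$ lies in $\Z[t]$ by Galois invariance, after which primitivity of $\Delta$ and Gauss's lemma force the constants to be $\pm1$.
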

\begin{proof}
  We may factor $\Delta(t)$ as a product of powers of irreducible polynomials $h(t)^s$. Since a polynomial $\Delta(t)$ is $n$-Hartley if and only if each $h(t)^s$ has this property, by \cite[Prop.~2.6]{Boyle2026}, it suffices to handle the case where $\Delta(t) = h(t)^s$. Let $\alpha$ denote a root of $h(t)$. In this case, $\Delta(t)$ satisfies the $n$-Hartley condition if and only if $\lambda(\alpha, n) \mid s$, and similarly for the $m$-Hartley and the $nm$-Hartley conditions.

  Since $\lambda(\alpha, n) \mid n$, and similarly for $m$, the two integers $\lambda(\alpha,n)$, $\lambda(\alpha, m)$ are relatively prime. Therefore
  \[ \lambda(\alpha, n) \mid s \text{ and }  \lambda(\alpha, m) \mid s \qquad \Leftrightarrow \qquad \lambda(\alpha, nm) \mid s, \]
  using Proposition \ref{pr:multiplicativity}. \end{proof}

We conclude this section with remarks on the connection between our results and the theory of symmetric knots. We are especially grateful to Keegan Boyle for explaining this material to us.

For most prime knots $K$, in a sense to be explained below, if $K$ admits free periodicities of relatively prime orders $m$ and $n$, then $K$ admits a free periodicity of order $mn$. This is reflected in our result on polynomials: if the Alexander polynomial of the knot has the $m$- and $n$-Hartley conditions, then it has the $mn$-Hartley condition.

``Most'' here means the prime knots to which \cite[Thm.~1]{Sakuma1986a} applies, and also the torus knots to which it does not apply. A torus knot $T_{p,q}$ can be handled directly: it has free periodicities of all orders $n$ that are relatively prime to $p$ and $q$ by \cite[Exc.~10.2.2(3)]{Kawauchi1996}, and its Alexander polynomial is $n$-Hartley in exactly these cases by an easy application of \cite[Prop.~1.5]{Boyle2026}.

The remaining knots to which \cite[Thm.~1]{Sakuma1986a} does not apply are of two kinds: satellite knots of composite knots, and satellite knots having a companion with respect to which they have trivial winding number. An example showing that there is a knot $K$ having free periodicities of relatively prime orders $m$ and $n$ but no free periodicity of order $mn$ is given in \cite[\S5, Rem.~5.2]{Sakuma1986a}. Take $K$ to be the $(5,1)$-cabling of the connected sum of $4$ copies of the $(13,5)$-torus knot. This is an instance of a satellite knot of a composite knot, so that \cite[Thm.~1]{Sakuma1986a} does not apply. The knot $K$ has free periodicities of orders $2,3,4$ but not of any greater order, in particular, not of order $6$. Nonetheless, the Alexander polynomial $\Delta_K(t)$ has the $2$-Hartley and $3$-Hartley conditions, and by our Theorem \ref{th:multiplicativeHartley}, it has the $6$-Hartley condition.


\section{Proof of Theorem {\protect \ref{th:main}}}
\label{sec:proof-theorem}

Throughout, we fix a field $K$ and an element $\alpha \in K$. Let $\beta$ be a root of $f(t) = t^n - \alpha$ in an algebraic closure $\bar K$. The polynomial $t^n - \alpha$ has roots $S= \{\zeta_n^j\beta\}_{j=0}^{n-1}$. Therefore $F = K(\zeta_n, \beta)$ is a splitting field for $t^n-\alpha$. Let $G$ denote the Galois group of $F$ over $K$. The orbits of the roots of $f(t)$ under the $G$ action correspond to the irreducible factors of $f(t)$ over $K$. We are interested in $\lambda_K(\alpha,n)$, which is the size of the smallest orbit of $G$ acting on $S$. Equivalently we may study the size of the largest stabilizer of $G$ acting on $S$.

The action of $g \in G$ on $K(\zeta_n, \beta)$ is determined by
\[ g(\zeta_n) = \zeta_n^a \qquad g(\beta) = \zeta_n^b \beta. \]
Let us define
\[ U(n) = \left\{
    \begin{bmatrix}
      a & b \\ 0 & 1 
    \end{bmatrix} \, \middle| \, a \in \Z/n\Z, \: b \in (\Z/n\Z)^\times \right\}. \]
It is reasonable to represent the group $G$ as a subgroup of $U(n)$.
We will write an element $\zeta_n^j \beta \in S$ as a column vector $
\begin{bmatrix}
  j \\ 1
\end{bmatrix}$ where $j \in \Z/n\Z$, and use the notation $S(n)$ to denote the set of all such vectors. The action of $g \in G$ on $\zeta_n^i \beta$ is given by ordinary matrix-vector multiplication:
\[ \begin{bmatrix}
    a & b \\ 0 & 1
  \end{bmatrix}
  \begin{bmatrix}
    j \\ 1
  \end{bmatrix} =
  \begin{bmatrix}
    aj + b \\ 1 
  \end{bmatrix}.
\]
We write $\Orb(G;x)$ to denote the orbit of $x$ under a $G$-action, and $\Stab(G;x)$ to denote the stabilizer subgroup. The cardinality of a set $X$ is denoted $\#X$. We offer a variant of Definition \ref{def:lambda}.
\begin{definition}\label{def:lambdaBis}
  The minimum of the values $\#\Orb(G; \vec v)$ as $\vec v$ ranges over $S(n)$ is denoted $\lambda(G)$.
\end{definition}

We prove the following result, divorced from Galois theory. It implies Theorem \ref{th:main} immediately, since the factors of $t^n - \alpha$ correspond to $G$-invariant subsets of $S(n)$.
\begin{proposition}\label{pr:mainAlt}
  Fix a positive integer $n$. Let $G$ be a subgroup of $U(n)$. If $\vec v \in S(n)$, then 
  \[ \lambda(G) \mid \#\Orb(G; \vec v). \]
\end{proposition}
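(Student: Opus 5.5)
The plan is to translate the statement into one about stabilizers and then exploit the affine structure of $U(n)$. (I read $U(n)$ as the affine group $x \mapsto ax+b$ of $\Z/n\Z$, with the multiplier $a$ a unit, since $g(\zeta_n)$ must be a primitive root.) By orbit–stabilizer, $\#\Orb(G;\vec v) = \#G/\#\Stab(G;\vec v)$. So writing $M$ for the largest stabilizer order, the claim $\lambda(G) \mid \#\Orb(G;\vec v)$ is equivalent to
\[ \#\Stab(G;\vec v) \mid M \quad\text{for every } \vec v \in S(n). \]
It therefore suffices to exhibit a single point $\vec w$ whose stabilizer order is a common multiple of all stabilizer orders.

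\emph{Step 1 (stabilizers are multiplier groups).} Let $\pi\colon G \to (\Z/n\Z)^\times$ record the multiplier $a$. An element fixing $j$ satisfies $b=(1-a)j$, so it is determined by $a$. Hence $\pi$ is injective on $\Stab(G;j)$, and $\#\Stab(G;j)=\#A_j$ with
\[ A_j=\{a\in\pi(G): (1-a)j\in b_a+T\}. \]
Here $T=G\cap\ker\pi$ is the translation subgroup, equal to $d\Z/n\Z$ for some $d\mid n$, and $b_a+T$ is the coset of translation parts of elements of $G$ with multiplier $a$.

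\emph{Step 2 (localize by CRT).} Write $n=\prod p^{k_p}$. The membership condition for $A_j$ is a congruence modulo $n$, so it splits into congruences modulo each $p^{k_p}$. This gives
\[ A_j=\bigcap_p A_j^{(p)}, \]
where $A_j^{(p)}$ depends only on $j \bmod p^{k_p}$. Since the residues of $j$ modulo the various $p^{k_p}$ can be chosen independently, the family of stabilizers is described by a product of local choices. The natural candidate for $\vec w$ is obtained by choosing, at each prime, a residue $j_p$ making the local subgroup as large as possible.

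\emph{Step 3 (main obstacle).} The hard step is to show that this choice actually dominates every stabilizer: that $\#A_j$ divides $\#A_w$ for the CRT-assembled $w$. The difficulty is that $A_j$ is an intersection of local subgroups rather than a product, so maximizing each $A_j^{(p)}$ separately does not immediately control the order of the intersection. My first attempt would be to show, at a single prime power $p^k$, that the local subgroups $A^{(p)}_{j}$ form a chain under inclusion. The motivation is that $(\Z/p^k)^\times$ has the filtration by $1+p^i\Z$, and the condition $(1-a)j\in b_a+T$ is governed by the $p$-adic valuations of $1-a$ and of $j$ relative to the coset $b_a+T$. If they form a chain, the maximal local choice contains every other local choice, so $A_j\subseteq A_w$ and Lagrange's theorem gives divisibility.

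A useful sanity check, and possibly the base case, is when $G$ has a fixed point. After conjugating by a translation, $G$ consists of multiplications, so $\lambda(G)=1$ and the statement is trivial. The general case should reduce to showing that, after passing to the action of $G/T$ on $\Z/d\Z$ (every orbit size picks up the same factor $n/d$), each local picture at a prime looks like this fixed-point situation up to translation. If the chain property fails for some local subgroup, I would instead try to prove the weaker statement that the local stabilizer orders are totally ordered by divisibility, which is all the argument needs.
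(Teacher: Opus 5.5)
Your Steps 1 and 2 are correct: stabilizers inject into the multiplier group, and $A_j=\bigcap_p A_j^{(p)}$. But they are bookkeeping. All of the content is in Step 3, which you leave as a plan, and the plan as stated fails because the chain lemma is false. In $U(8)$, take the Klein four-group of affine maps $G=\{z,\,-z,\,5z+4,\,3z+4\}$. The stabilizers of $0,1,2$ have multiplier groups $\{1,-1\}$, $\{1,5\}$, $\{1,3\}$, which are pairwise incomparable. Every orbit has size $2$, so the proposition holds here, but not via inclusions.

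Nor is this only a $p=2$ phenomenon, because your $A_j^{(p)}$ live in the global group $\pi(G)$. In $U(63)$, the group $G=\langle 22z,\ 37z+21\rangle\cong(\Z/3\Z)^2$ has $T=0$. Here $\Stab(G;0)=\langle 22z\rangle$ and $\Stab(G;28)=\langle 4z+42\rangle$. The maximal stabilizers are three distinct subgroups of order $3$, so no $A_w$ contains all the $A_j$. Thus $A_j\subseteq A_w$ cannot be the mechanism. Your fallback, that local stabilizer orders are totally ordered by divisibility, is true. However, you do not prove it, and by itself it does not control $\#\bigcap_p A_j^{(p)}$, which is exactly the difficulty you raised. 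So it is not ``all the argument needs.''

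What is missing is the actual argument, in two pieces.

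\emph{Locally}, for a subgroup of $U(p^k)$, you need a dichotomy.
\begin{itemize}
\item If every multiplier is $\equiv 1\pmod p$ (always true for $p=2$), the group is a $p$-group, so all orbit sizes are powers of $p$.
\item Otherwise $p$ is odd and the multiplier group is cyclic, generated by some $a\not\equiv 1\pmod p$. An element $z\mapsto az+b$ then fixes $u=-b/(a-1)$. Since translations act freely, every stabilizer embeds in the multiplier group. So $\Stab(G;u)$, which surjects onto it, is the largest stabilizer, and $\#\Orb(G;u)$ (the order of the translation subgroup) divides every orbit size.
\end{itemize}
Your fixed-point heuristic cannot replace the first case: $z\mapsto -z+1$ on $\Z/4\Z$ has $T=0$ and no fixed point.

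\emph{Globally}, you must combine using orbit sizes rather than inclusions. The size $\#\Orb(G;j)=[G:\bigcap_p\Stab(G;j_p)]$ is divisible by every local orbit size and is at most their product. The local minima $\lambda_p$ divide $p^{k_p}$, since the orbits partition $\Z/p^{k_p}\Z$, so they are pairwise coprime. Hence $\prod_p\lambda_p$ divides every orbit size, and it is attained at the CRT-assembled point whose local orbits are all minimal. This coprimality, together with the bounds $\lcm\mid\#\Orb\le\prod$, is the idea that resolves your ``intersection rather than product'' problem.
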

The proof of this follows some lemmas.

\begin{lemma} \label{lem:primePower}
  Proposition \ref{pr:mainAlt} holds if $n$ is a power of a prime.
\end{lemma}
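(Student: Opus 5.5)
The plan is to work entirely inside the affine group of $\Z/p^k\Z$ and to compare stabilizers instead of orbits. Write $n=p^k$, and write each $g\in G$ as the affine map $j\mapsto a_gj+b_g$, with $a_g\in(\Z/p^k\Z)^\times$ the multiplier. By orbit--stabilizer, $\#\Orb(G;\vec v)=\#G/\#\Stab(G;\vec v)$. So it suffices to show that the stabilizer orders are totally ordered by divisibility, or better, that all stabilizers are ordered by inclusion up to conjugacy in $G$. Then the largest stabilizer has an order divisible by every other stabilizer order, and $\lambda(G)=\#G/\max_{\vec v}\#\Stab(G;\vec v)$ divides every orbit size.

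The first step is to record that $g\mapsto a_g$ is injective on each stabilizer. Indeed, if $g$ fixes $j$ then $b_g=(1-a_g)j$, so $g$ is determined by $a_g$. Hence $\Stab(G;j)$ is isomorphic to a subgroup $A_j$ of the multiplier group $A=\{a_g:g\in G\}\subseteq(\Z/p^k\Z)^\times$. Next, let $T=\{g\in G : a_g=1\}$ be the translation subgroup. It is normal in $G$, and its translations form a subgroup $p^r\Z/p^k\Z$. The $T$-orbits are the cosets of $p^r\Z/p^k\Z$, all of size $p^{k-r}$, and every $G$-orbit is a union of such cosets. So one may pass to the induced action of $G/T\cong A$ on $\Z/p^r\Z$, where the stabilizer of the class of $j$ is again $A_j$.

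The key step, and the one I expect to be the main obstacle, is to show that the $A_j$ form a chain. I would use the filtration of $(\Z/p^k\Z)^\times$ by the subgroups $1+p^i\Z/p^k\Z$ together with the valuation $v(a_g-1)$. The element $g$ has a fixed point precisely when $p^{v(a_g-1)}$ divides $b_g$. In that case its fixed points form a full coset of $p^{k-v(a_g-1)}\Z/p^k\Z$. For $p$ odd the unit group is cyclic, so I would try to show that $A_j$ is determined by the largest level of the filtration whose elements fix $j$. That would force the $A_j$ to be nested, namely the subgroups $A\cap(\text{torsion part}\times(1+p^{i}\Z))$ for varying $i$, with the prime-to-$p$ part handled first. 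For $p=2$ the unit group $\{\pm1\}\times(1+4\Z)$ is not cyclic, and a separate check is needed that $-1$-type elements do not produce two incomparable stabilizers. I would first try to rule this out by a direct computation with the fixed-point cosets. Failing that, I would weaken the goal to divisibility of orders, comparing $\#A_j$ against the maximal one level by level in the filtration.
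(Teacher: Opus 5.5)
\paragraph{Where the proposal breaks.} Your opening reductions are correct. By orbit--stabilizer it suffices that every stabilizer order divides the largest one, and each $\Stab(G;j)$ embeds in the multiplier group $A$. The gap is the key step: you do not prove that the $A_j$ form a chain, and for $p=2$ they need not.

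Take $n=8$ and $G=\{x\mapsto x,\ x\mapsto 3x,\ x\mapsto 5x+4,\ x\mapsto 7x+4\}$. This is an abelian group of order $4$ with no nontrivial translations. It gives $A_0=\{1,3\}$, $A_1=\{1,5\}$ and $A_2=\{1,7\}$, which are pairwise incomparable. Since $G$ is abelian, passing to conjugates changes nothing.

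For odd $p$ a chain does exist, but not of the shape you predict: the $A_j$ need not contain the prime-to-$p$ part of $A$. For $n=7$ and $G=\langle x\mapsto 3x\rangle$ one has $A_0=A=(\Z/7\Z)^\times$ but $A_j=1$ for $j\neq 0$. In fact, when $A$ is not a $p$-group, each $A_j$ is either all of $A$ or lies in $1+p\Z/p^k\Z$. So ``handling the prime-to-$p$ part first'' by putting it inside every stabilizer cannot work, and your sketch supplies no mechanism that produces the nesting.

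\paragraph{The missing idea.} You never need a chain, only one stabilizer whose order is divisible by all the others. Split according to whether $A$ is a $p$-group.

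\emph{Case 1: $A$ is a $p$-group.} This is automatic for $p=2$, since $\#(\Z/2^k\Z)^\times=2^{k-1}$. Then $G$, an extension of $A$ by the $p$-group $T$, is itself a $p$-group. All stabilizer orders are powers of $p$ and hence trivially totally ordered by divisibility. This is your fallback for $p=2$, but it needs no level-by-level comparison.

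\emph{Case 2: $A$ is not a $p$-group.} Then $p$ is odd, $A$ is cyclic, and some $a\in A$ has $a-1$ a unit. A generator $c$ of $A$ then also has $c-1$ a unit: writing $a=c^m$ gives $a-1=(c-1)(c^{m-1}+\dots+1)$. Any $g\in G$ with $a_g=c$ fixes $u=-b_g/(c-1)$. Hence $\Stab(G;u)$ maps onto $A$ and has order $\#A$. Every other stabilizer order $\#A_j$ divides $\#A$ by Lagrange. Equivalently, $\#\Orb(G;u)=\#T$ divides every orbit size. This is exactly the paper's argument.
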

\begin{proof}
  Suppose $n=p^l$ where $p$ is a prime number and $l$ is a positive integer.

  There is a determinant homomorphism $\det: G \to (\Z/p^l\Z)^\times$. The image is a subgroup of $(\Z/p^l\Z)^\times$. Write $H$ for the kernel of $\det : G \to (\Z/p^l\Z)^\times$. The group $H$ is a subgroup the group of matrices
  \[
    \begin{bmatrix}
      1 & b \\ 0 & 1 
    \end{bmatrix}, \qquad b \in \Z/p^l\Z, \]
  which is isomorphic to the additive group $\Z/p^l\Z$. In particular, $H$ is a cyclic group of power-of-$p$ order.
  
  We divide the argument into two cases. In the first case, $\det(G)$ lies in the kernel of the reduction-modulo-$p$ map $\phi: (\Z/p^l\Z)^\times \to (\Z/p\Z)^\times$. Note that the target of this homomorphism is trivial if $p=2$, so that if $p=2$ we must be in this case. The kernel of the map $\phi$ is a group of order $p^{l-1}$, and in particular, $\det(G)$ is a $p$-group. It follows from the short exact sequence
  \[ 1 \to H \to G \to \det(G) \to 1 \]
  that $G$ itself is a $p$-group, and as a consequence, all the $G$-orbits of elements of $S$ have power-of-$p$ cardinalities. Therefore, the size of the smallest order divides the sizes of all the others.
  
  In the second case, we assume there exists $a \in \det(G)$ for which $a-1 \in (\Z/p^l\Z)^\times$. Using this, we will find  an element in $S$ whose $G$-orbit has a cardinality dividing all the others. Note that $p$ must be odd for such an $a$ to exist.

  The group $H$ acts freely on $S$, since
  \[
    \begin{bmatrix}
      1 & b \\ 0 & 1 
    \end{bmatrix}
    \begin{bmatrix}
      j \\ 1 
    \end{bmatrix} =
    \begin{bmatrix}
      j +b \\ 1 
    \end{bmatrix}.
  \]
  Therefore, for any $\vec v \in S(p^l)$ the stabilizer subgroup $\Stab(G; v)$ maps isomorphically to its image under the determinant map. We write $\det(\Stab(G;v))$ for this image. The orbit-stabilizer theorem tells us that
  \[ \#\Orb(G;\vec v) = \frac{\#G}{\#\Stab(G; v)} = \frac{\#H \times \#\det(G)}{\#\det(\Stab(G; v))} = \#H \frac{\#\det(G)}{\#\det(\Stab(G; v))},
  \] so that the size of each orbit is necessarily a multiple of $\#H$.

  We conclude by constructing an element $\vec u$ whose stabilizer is of order $\#\det(G)$, and whose orbit is of size $\#H$ exactly as a consequence.

  By assumption, there exists an element
  \[ g=
  \begin{bmatrix}
    a & b \\ 0 & 1 
  \end{bmatrix} \] for which $a-1 \in (\Z/p^l\Z)^\times$.  Since $p$ is odd, we know that $(\Z/p^l\Z)^\times$ is a cyclic group. It follows that $\det(G)$ itself must be a cyclic group and we may suppose without loss of generality that $a$ generates $\det(G)$ as a group. This implies that the order of $g$ in $G$ is at least $\#\det(G)$. 

  Take the vector $ \vec u =
  \begin{bmatrix}
    -b/(a-1) \\ 1 
  \end{bmatrix}$. By direct calculation, we see that $\vec u$ is fixed by $g$, so that the stabilizer $\Stab(G; u)$ has order $\#\det(G)$, which is what we needed.
\end{proof}

\begin{lemma}\label{lem:divisibilityLemma}
  Suppose Proposition \ref{pr:mainAlt} is known to hold for some $n$ and $G$. Then $\lambda(G) \mid n$.
\end{lemma}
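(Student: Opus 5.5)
The idea is to exploit that $S(n)$ is the disjoint union of its $G$-orbits, since $G$ acts on the finite set $S(n)$. This set has exactly $n$ elements, the vectors $\begin{bmatrix} j \\ 1\end{bmatrix}$ for $j \in \Z/n\Z$. Then $n$ is the sum of the cardinalities of the distinct orbits. I will choose representatives $\vec v_1, \dots, \vec v_r \in S(n)$, one for each $G$-orbit, and write
\[ n = \#S(n) = \sum_{i=1}^{r} \#\Orb(G; \vec v_i). \]

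Next I apply the hypothesis that Proposition \ref{pr:mainAlt} holds for this $n$ and $G$. It says that $\lambda(G) \mid \#\Orb(G;\vec v_i)$ for every $i$. A sum of multiples of $\lambda(G)$ is a multiple of $\lambda(G)$, so $\lambda(G) \mid n$.

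There is no real obstacle. The only points to check are that the orbits genuinely partition $S(n)$, which holds because $G$ is a group acting on $S(n)$ by the matrix-vector multiplication described above, and that $\#S(n)=n$, which is clear from the indexing by $\Z/n\Z$. This is the same observation the paper makes informally after Theorem \ref{th:main}, namely that $\lambda_K(\alpha,n)\mid n$ because the degrees of the irreducible factors of $t^n-\alpha$ sum to $n$. Presumably the lemma is then used to control $\lambda(G)$ when combining prime-power cases via the Chinese remainder theorem.
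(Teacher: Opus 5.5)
Your proof is correct and is the paper's own argument: $S(n)$ is an $n$-element set partitioned into $G$-orbits, each of cardinality divisible by $\lambda(G)$ by the assumed instance of Proposition~\ref{pr:mainAlt}, so $\lambda(G)$ divides their sum $n$.
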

\begin{proof}
  The $n$-element set $S$ decomposes as a disjoint union of orbits, all of whose cardinalities are multiples of $\lambda(G)$.
\end{proof}

\begin{proof}[Proof of the general case of Proposition {\protect \ref{pr:mainAlt}}]
  The proof is an induction on the number of prime factors of $n$. The base case is Lemma \ref{lem:primePower}.

  Suppose there is a factorization $n= mp^l$ where $p \nmid m$. Using the Chinese remainder theorem, we can write elements $j \in \Z/n\Z$ as pairs $(j_m, j_p) \in \Z/m\Z \times \Z/p^l\Z$. Write $G_m$ and $G_{p^l}$ for the reductions of $G$ modulo $m$, $p^l$ respectively, and if $g \in G$, write $g_m, g_p$ for the reductions of $g$. The group $G$ acts on $S(m)$ via this reduction, and similarly for the case of $p^l$.

  The orbit of $G$ acting on an element $\vec v =
  \begin{bmatrix}
    j \\ 1
  \end{bmatrix}
  \in S(n)$ is given by
  \[ g \cdot
    \begin{bmatrix}
      j \\ 1
    \end{bmatrix} =
    \begin{bmatrix}
          (g_m \cdot j_m , g_p \cdot j_p )  \\ 1 
    \end{bmatrix},\]
  so that $\Stab(G;\vec v) = \Stab(G; \vec v_m) \cap \Stab(G; \vec v_{p^l})$.  Elementary group theory tells us that
  \[
    [G : \Stab(G; \vec v_m) \cap \Stab(G; \vec v_{p^l})] \le [G : \Stab(G; \vec v_m)][G: \Stab(G;\vec v_{p^l})].
  \]
  The term on the left hand side is $\#\Orb(G; \vec v)$, while the terms on the right hand side are $\#\Orb(G_m; \vec v_m)$ and $\#\Orb(G_{p^l}; \vec v_{p^l})$. Since $[G:\Stab(G; \vec v_m)] = \#\Orb(G_m; \vec v_m)$ divides
  \[\#\Orb(G; \vec v)= [G : \Stab(G;\vec v_m) \cap \Stab(G;\vec v_{p^l})] = [G:\Stab(G;{\vec v_m})][\Stab(G;{\vec v_m}):  \Stab(G;{\vec v_m})\cap \Stab(G{\vec v_{p^l}})], \]
  and similarly for $p^l$, we deduce that
  \begin{equation}
    \label{eq:2}
    \lcm(\#\Orb(G_m; \vec v_m), \#\Orb(G_{p^l}; \vec v_{p^l})) \mid \#\Orb(G; \vec v), \quad \text{and} \quad \#\Orb(G; \vec v) \le  \#\Orb(G_m; \vec v_m) \times \#\Orb(G_{p^l}; \vec v_{p^l}).
  \end{equation}
  Since $\lambda(G_m)$ and $\lambda(G_{p^l})$ are relatively prime by Lemma \ref{lem:divisibilityLemma}, and \[ \lambda(G_m) \mid \#\Orb(G_m; \vec v_m) \: \text{ and } \lambda(G_{p^l}) \mid \#\Orb(G_{p^l}; \vec v_{p^l})\] by the induction hypothesis, we deduce
  \begin{equation}
    \label{eq:3}
    \lambda(G_m) \lambda(G_{p^l}) \mid \#\Orb(G; \vec v).
  \end{equation}
  The argument will be completed if we can show that $ \lambda(G_m) \lambda(G_{p^l}) = \lambda(G)$.
  
  There exists at least one $\vec u \in S(n)$ for which $\#\Orb(G_m; \vec u_m)$ and $\# \Orb(G_{p^l}; \vec u_{p^l})$ are each minimal. Applying the relations  of  \eqref{eq:2} to this $\vec u$ gives 
  \[ \lambda(G_m) \lambda(G_{p^l}) \mid \#\Orb(G; \vec u) \le \lambda(G_m) \lambda(G_{p^l}).\]
  Combined with \eqref{eq:3}, this shows that $ \#\Orb(G; \vec u) = \lambda(G_m) \lambda(G_{p^l})$ is minimal, i.e., is $\lambda(G)$.
\end{proof}

\section*{Acknowledgements}\label{sec:acknowledgements}

The authors thank the founders and maintainers of the website \texttt{\href{https://mathoverflow.net/}{MathOverflow}}. The main argument in this paper first appeared on that website as an answer by the first author to a question posed by the second, \cite{Bolan2025o}. 

We thank Keegan Boyle for comments on a draft of this manuscript. Ben Williams also thanks Keegan Boyle for introducing him to the $n$-Hartley condition, and for a great many conversations on the topic of symmetric knots.
\printbibliography
\end{document}
